\newtheorem{defn}{Definition}
\newtheorem{pro}{Proposition}
\newtheorem{algo}{Algorithm}
\newtheorem{rem}{Remark}
\def\csch{\mathop{\mathrm{csch}}\nolimits}
\def\Li{\mathop{\mathrm{Li}}\nolimits}
\begin{document}

\title{The series limit of $\sum_k 1/[k \log k (\log \log k)^2]$}

\author{Richard J. Mathar}
\urladdr{https://www.mpia-hd.mpg.de/~mathar}
\email{mathar@mpia.de}
\address{Max-Planck Institute of Astronomy, K\"onigstuhl 17, 69117 Heidelberg, Germany}

\subjclass[2000]{Primary 40-04, 40A25; Secondary 65B10}

\date{\today}
\keywords{Series, inverse logarithm, slow convergence}

\begin{abstract}
The slowly converging series
$\sum_{k=3}^\infty 1/[k\log k(\log \log k)^\alpha]$
is evaluated to $\approx 38.4067680928$ at $\alpha=2$.
After some initial terms, the infinite tail of the sum
is replaced by the integral of the associated interpolating function,
which is available in simple analytic form.
Biases that originate from the difference between the smooth
area under the function and the corresponding Riemann sum
are corrected by standard means.
The cases $\alpha=3$ and $\alpha=4$ are computed in the same manner.
\end{abstract}

\maketitle

\section{Aim and Scope}

We aim at a precise numerical evaluation of the
series limit of
\begin{equation}
C\equiv \sum_{k=3}^\infty \frac{1}{k\log k(\log \log k)^2}
,
\label{eq.Cdef}
\end{equation}
which has been
estimated at $C\approx 38.43$
in the CRC tables \cite[p.\ 42]{ZwillingerCRC} and $\approx 38.406768$
by Boas \cite{BoasAMM84}.
The presence of the square of the double logarithm in the denominator
is just sufficient to achieve convergence; direct numerical summation
is a futile strategy to estimate the series limit.
This work addresses how the family of closely related
\begin{defn}
\begin{equation}
C^{(\alpha)}\equiv \sum_{k=3}^\infty \frac{1}{k\log k(\log \log k)^\alpha}
\label{eq.Calphadef}
\end{equation}
\end{defn}
is calculated by standard Euler-Maclaurin methods of numerical
analysis for $\alpha=2$ to $\alpha=4$.

\section{Numerical strategy}
\subsection{Romberg Integration}
Some initial terms up to $k=N$ are summed directly.
Fig.\ \ref{fig.str}  demonstrates the methodology for all larger indices:
Starting at $k=N+1$, this could
be made precise by adding the areas of the rectangular boxes, but they
are substituted by the area $I_N^{(\alpha)}$ under the interpolating smooth function.
In consequence, a curvature
correction is needed since the areas added and omitted by this integral
at the top of each box---dotted areas in the inset illustrating
the case of one of these---are of different size.

\begin{figure}
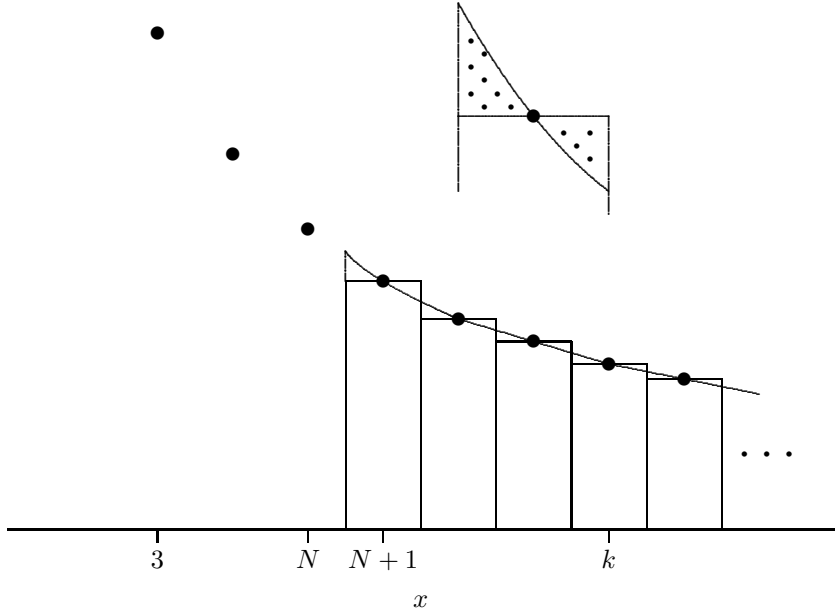

\beginpicture
\setcoordinatesystem units <1mm,1mm>
\setplotarea x from 10 to 120, y from 10 to 75
\axis bottom shiftedto x=0 label $x$ ticks withvalues 3 $N$ $N+1$ $k$ / at 30 50 60 90 / /
\put {\circle*{5}} [Bl] at 30 66
\put {\circle*{5}} [Bl] at 40 50
\put {\circle*{5}} [Bl] at 50 40
\put {\circle*{5}} [Bl] at 60 33
\put {\circle*{5}} [Bl] at 70 28
\put {\circle*{5}} [Bl] at 80 25
\put {\circle*{5}} [Bl] at 90 22
\put {\circle*{5}} [Bl] at 100 20
\put {\circle*{2}} [Bl] at 108 10
\put {\circle*{2}} [Bl] at 111 10
\put {\circle*{2}} [Bl] at 114 10
\setquadratic \plot 55 37 60 33 70 28 80 25 90 22 100 20 110 18 /

\setshadesymbol <1mm,1mm,1mm,1mm> ( {\circle*{2}} ) 
\put {\circle*{5}} [Bl] at 80 55
\vshade 70 55 68    75 55 63   80 55 55 /
\plot 70 70    80 55   90 45 /
\vshade 80 55 55   85 50 55  90 40 55 /
\setlinear \plot 70 45 70 55 90 55 90 42 / \plot 70 55 70 70  /

\sethistograms
\plot 55 0 65 33 75 28 85 25 95 22 105 20 /
\setlinear \plot 55 33 55 37 /
\endpicture
\caption{The summation of $C^{(\alpha)}$ accumulates the ordinate values
of a set of discrete points (bold dots). The integral
under the smooth curve stretching from $x=N+\frac{1}{2}$ to infinity
is
given by (\ref{eq.In}).
}
\label{fig.str}
\end{figure}

\begin{pro}
\begin{equation}
I_N^{(\alpha)}=\int_{x=N+1/2}^\infty \frac{dx}{x\log x(\log \log x)^\alpha}
=
\frac{1}{(\alpha-1)\left[\log \log (N+\frac{1}{2})\right]^{\alpha-1}},
\quad \alpha=2,3,\ldots.
\label{eq.In}
\end{equation}
\end{pro}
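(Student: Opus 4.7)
The plan is to evaluate the integral by a single substitution that collapses the compound logarithm in the denominator. I would set $u=\log\log x$, so that
\[
du = \frac{1}{\log x}\cdot \frac{1}{x}\,dx = \frac{dx}{x\log x},
\]
which matches the factor $dx/(x\log x)$ appearing in the integrand. Under this substitution the integrand reduces to the power function $u^{-\alpha}\,du$, and the lower limit $x=N+\tfrac{1}{2}$ maps to $u=\log\log(N+\tfrac{1}{2})$ while $x\to\infty$ maps to $u\to\infty$.

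Next I would apply the elementary antiderivative
\[
\int u^{-\alpha}\,du = \frac{u^{1-\alpha}}{1-\alpha},\quad \alpha\neq 1,
\]
and evaluate it between the two transformed limits. Because $\alpha\ge 2$, the exponent $1-\alpha$ is strictly negative, so $u^{1-\alpha}\to 0$ as $u\to\infty$, and the upper endpoint contributes nothing. The lower endpoint contributes $-[\log\log(N+\tfrac{1}{2})]^{1-\alpha}/(1-\alpha)$, which after sign rearrangement becomes $1/\{(\alpha-1)[\log\log(N+\tfrac{1}{2})]^{\alpha-1}\}$, matching (\ref{eq.In}).

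Two minor points warrant a sentence of commentary rather than serious effort. First, one must check that $N+\tfrac{1}{2}$ is large enough that $\log\log(N+\tfrac{1}{2})>0$, i.e.\ $N+\tfrac{1}{2}>e$, so the substitution is monotone and the antiderivative is real; this is compatible with the starting index $k=3$ in (\ref{eq.Calphadef}). Second, the condition $\alpha>1$ is precisely what makes the improper integral at $\infty$ converge, which is why the stated formula does not include $\alpha=1$. There is no real obstacle: the entire argument is a textbook $u$-substitution, and the content of the proposition lies in recognizing that the triply-nested structure $x\log x(\log\log x)^\alpha$ is designed so that one substitution exposes a pure power integrand.
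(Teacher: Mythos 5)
Your proof is correct and is essentially the same computation as the paper's: the paper substitutes $z=1/\log x$ and then cites the table integral $\int dz/[z(\log z)^\alpha]$, whereas you substitute $u=\log\log x$ directly, which reaches the same antiderivative $-\frac{1}{(\alpha-1)(\log\log x)^{\alpha-1}}$ in one step. Your added remarks on $N+\tfrac12>e$ and on $\alpha>1$ ensuring convergence at the upper limit are accurate and harmless.
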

\begin{proof}
This follows from the substitution
\begin{equation}
\log x=\frac{1}{z};\quad 
x=e^{1/z};\quad
dx=-\frac{dz}{z^2}e^{1/z};\quad
\frac{dx}{x}=-\frac{dz}{z^2}
\end{equation}
via \cite[2.721.2]{GR}
\begin{gather}
I_N^{(\alpha)}=\int_{z=0}^{1/\log(N+1/2)} \frac{dz}{z(\log z)^\alpha}
=
-\left.\frac{1}{(1-\alpha)\log^{\alpha-1} z}\right|_{0}^{1/\log(N+1/2)}
\end{gather}
\end{proof}
\begin{rem}
The Cauchy-Maclaurin integral test plus the finiteness of
this integral proof the convergence of (\ref{eq.Calphadef}).
\end{rem}
\begin{rem}
(\ref{eq.In}) estimates that an absolute accuracy of $\Delta C^{(2)}\approx 1$ is
reached after direct summation of $N\approx e^{e^{1/\Delta C}}\approx 15$ terms, an accuracy
of $\Delta C^{(2)}\approx 0.1$ only after $N\approx
9\times 10^{9565}$ terms.
\end{rem}

The half-infinite interval is cut into abscissa sections of unit width
centered at integer $k$,
\begin{equation}
I_N^{(\alpha)}=\sum_{k=N+1}^\infty A_k^{(\alpha)}.
\end{equation}

The curvature correction is estimated through a
Taylor series expansion around the mid-point $x=k$ in each unit interval,
\begin{eqnarray}
A_k^{(\alpha)}
&=&\int_{k-1/2}^{k+1/2} \frac{dx}{x\log x(\log\log x)^\alpha}
\\
&=&
\int_{k-1/2}^{k+1/2} \Big[
\frac{1}{k\log k(\log \log k)^\alpha}+\cdots\frac{(x-k)^1}{1!}
\nonumber
\\
&&\quad +\frac{d^2}{dx^2}\frac{1}{x\log x(\log \log x)^\alpha}\frac{(x-k)^2}{2!}
+\cdots \frac{(x-k)^3}{3!}
+\cdots
\Big]dx
.
\end{eqnarray}
The integrals over terms with odd powers of $(x-k)$ do not
contribute due to the symmetry of the limits;
only terms $\propto (x-k)^{2s}$ remain,
\begin{equation}
A_k^{(\alpha)}
=
\frac{1}{k\log k(\log \log k)^\alpha}+\sum_{s=1}^\infty
\int_{k-1/2}^{k+1/2}
\frac{d^{2s}}{dx^{2s}}\frac{1}{x\log x(\log \log x)^\alpha}_{\mid k}\frac{(x-k)^{2s}}{(2s)!}
dx
.
\end{equation}
Inserting the elementary \cite[2.01.1]{GR}
\begin{equation}
\int_{k-1/2}^{k+1/2} (x-k)^{2s}dx
=\frac{1}{4^s(2s+1)}
\end{equation}
turns this into
\cite[Thrm. 1]{BredeITSF17}
\begin{equation}
A_k^{(\alpha)}=
\frac{1}{k\log k(\log \log k)^\alpha}
+\sum_{s=1}^\infty
\frac{1}{4^s(2s+1)!}
\frac{d^{2s}}{dx^{2s}}\frac{1}{x\log x(\log \log x)^\alpha}
_{\mid x=k}
.
\label{eq.Ck}
\end{equation}
The first $N$ terms of (\ref{eq.Calphadef}) are summed as they stand,
and the terms from $N+1$ on are rephrased according to (\ref{eq.Ck}),
\begin{gather}
C^{(\alpha)}=
\sum_{k=3}^N
\frac{1}{k\log k(\log \log k)^\alpha}
+
\sum_{k=N+1}^\infty
\frac{1}{k\log k(\log \log k)^\alpha}
\\
=
\sum_{k=3}^N
\frac{1}{k\log k(\log \log k)^\alpha}
+\sum_{k=N+1}^\infty
\left(
A_k^{(\alpha)}-\sum_{s=1}^\infty
\frac{1}{4^s(2s+1)!}
\frac{d^{2s}}{dx^{2s}}\frac{1}{x\log x(\log \log x)^\alpha}
\right)
\nonumber
\end{gather}
to yield
\begin{algo} (Romberg)
\begin{gather}
C^{(\alpha)}
=
\sum_{k=3}^N
\frac{1}{k\log k(\log \log k)^\alpha}
+I_N^{(\alpha)}
-
\sum_{s=1}^\infty
\frac{1}{4^s(2s+1)!}
\sum_{k=N+1}^\infty
\frac{d^{2s}}{dx^{2s}}\frac{1}{x\log x(\log \log x)^\alpha}_{\mid k}
.
\label{eq.final}
\end{gather}
\end{algo}

The derivatives $d^{2s}/dx^{2s}$
share a common
format, which suggests the
notational shortcut
\begin{defn} (Atoms of Derivatives)
\begin{equation}
g(n,l,L)\equiv \frac{1}{x^n (\log x)^l (\log \log x)^L}
.
\end{equation}
\end{defn}
The derivatives for $s=1$ or $s=2$ then read
\begin{gather}
\frac{d^2}{dx^2}g(1,1,\alpha)
=
2g(3,1,\alpha)
+3g(3,2,\alpha)
+3\alpha g(3,2,1+\alpha)
\label{eq.d22}
+2g(3,3,\alpha)\\
+3\alpha g(3,3,1+\alpha)
+\alpha(1+\alpha) g(3,3,1+\alpha)
\nonumber
,
\end{gather}
and
\begin{gather}
\frac{d^4}{dx^4}g(1,1,\alpha)
=
24g(5,1,\alpha)
+50g(5,2,\alpha)
+70g(5,3,\alpha)
+60g(5,4,\alpha)
\label{eq.d43}
\\
+24g(5,5,\alpha)
+50\alpha g(5,2,1+\alpha)
+105\alpha g(5,3,1+\alpha)
+35\alpha(1+\alpha) g(5,3,2+\alpha)
\nonumber
\\
+110\alpha g(5,4,1+\alpha)
+60\alpha(1+\alpha) g(5,4,2+\alpha)
+10\alpha(2+3\alpha+\alpha^2)g(5,4,3+\alpha)
\nonumber
\\
+50\alpha g(5,5,1+\alpha)
+35\alpha(1+\alpha) g(5,5,2+\alpha)
+10\alpha(2+3\alpha+\alpha^2)g(5,5,3+\alpha)
\nonumber
\\
+\alpha(6+11\alpha+6\alpha^2+\alpha^3)g(5,5,4+\alpha)
,
\nonumber
\end{gather}
for example.

All terms are positive---so all these curvature corrections reduce
the integral estimator $I_N^{(\alpha)}$ in (\ref{eq.final}), as expected
from Fig.\ \ref{fig.str}
for convex series of points.
The terms of order $s$ have a factor $x^{2s+1}$ in their denominator---they
converge quicker than the original series.

The properties of (\ref{eq.final})
for different switch-over values $N$ are  illustrated in Table \ref{tab.C2}.
Its column $C^{(\alpha)}$ is the value of (\ref{eq.final}) after
replacing the infinite upper limit in $\sum_k$
by $\hat k$, and by including the corrections
of orders $s=1$, $2$ and $3$.
The columns headed $s=1$ or $s=2$ show
$[4^s(2s+1)!]^{-1} \sum_{k=N+1}^{\hat k} d^{2s}/dx^{2s} g(1,1,\alpha)$
to give an impression of the cumulative magnitude of the curvature corrections.
\begin{rem}
The numbers in the column $s=2$ appear to be constant as a function
of $\hat k$ at 
the precision shown, because the contributions from the 4th
derivatives are $\propto 1/k^5$---as argued above---, so their partial
sums have already converged at $\hat k=800$.
\end{rem}

\begin{table}
\caption{Convergence of (\ref{eq.final}) at $\alpha=2$.
}
\begin{tabular}{|l|l|l|l|l|}
\hline
$N$ & $\hat k$ & $C^{(2)}$ & $s=1$ & $s=2$ \\
\hline
20&400&38.4067681111183854426&0.0000517608816&0.0000000214938\\
20&800&38.4067680963437039923&0.0000517756562&0.0000000214939\\
20&1600&38.4067680935234520571&0.0000517784765&0.0000000214939\\
20&3200&38.4067680929653951654&0.0000517790345&0.0000000214939\\
20&6400&38.4067680928518229268&0.0000517791481&0.0000000214939\\
\hline40&400&38.4067681111183785953&0.0000067243761&0.0000000005800\\
40&800&38.4067680963436971450&0.0000067391508&0.0000000005800\\
40&1600&38.4067680935234452098&0.0000067419711&0.0000000005800\\
40&3200&38.4067680929653883181&0.0000067425291&0.0000000005800\\
40&6400&38.4067680928518160795&0.0000067426427&0.0000000005800\\
\hline80&400&38.4067681111183785868&0.0000010106346&0.0000000000196\\
80&800&38.4067680963436971365&0.0000010254093&0.0000000000196\\
80&1600&38.4067680935234452013&0.0000010282295&0.0000000000196\\
80&3200&38.4067680929653883095&0.0000010287876&0.0000000000196\\
80&6400&38.4067680928518160710&0.0000010289011&0.0000000000196\\
\hline
\end{tabular}
\label{tab.C2}
\end{table}

\subsection{Euler-Maclaurin}
Recursive use of this technique for the sums originating from
the curvature approximations generates an Euler-Maclaurin formula \cite{MartensenZAMM85}
\begin{algo} (Centered Euler-Maclaurin)
\begin{gather}
C^{(\alpha)}=
\sum_{k=3}^N
\frac{1}{k\log k(\log \log k)^\alpha}
+
A_k^{(\alpha)}
+\sum_{s=1}^\infty \frac{1}{2^{2s-1}}\beta(s)\frac{d^{2s-1}}{dx^{2s-1}}g(1,1,\alpha)
.
\label{eq.eul}
\end{gather}
\end{algo}
Here,
\begin{equation}
\frac{\beta(s)}{2^{2s-1}} = 
\sum_{\pi(s)=[1^{m_1},2^{m_2},\cdots ,s^{m_s}]}
\prod_j \left(-\frac{1}{4^j(2j+1)!}\right)^{m_j}
\end{equation}
is a sum over all ordered partitions (compositions) of $s$ that
accumulate the different paths of insertions of curvatures.
We first note that the factor $1/2^{2s}$ on the left hand side
cancels with the product $\prod_j 1/(4^{jm_j})$ on the right hand side.
Reverse
application of Vella's variant of Fa\`a di Bruno's formula \cite[(2)]{VellaINT8}
transforms this into \cite{GouldAMM70}
\begin{gather}
\beta(s)= (2^{2s-1}-1)\frac{B_{2s}}{(2s)!}
\end{gather}
in terms of signed Bernoulli numbers $B$ \cite[Ch.\ 23]{AS}
with generating function \cite[4.5.65]{AS}
\begin{equation}
1-z\csch z = 2\sum_{s=1}^\infty \beta(s)z^{2s}
.
\end{equation}
With this approach, equations (\ref{eq.d22})--(\ref{eq.d43}) are not needed
and are replaced by derivatives of odd order in (\ref{eq.eul})---to be evaluated
at $x=N+\frac{1}{2}$. The dominant orders are
\begin{gather}
\frac{d}{dx}g(1,1,\alpha)
=
-g(2,1,\alpha)
-g(2,2,\alpha)
-\alpha g(2,2,1+\alpha);
\end{gather}
\begin{gather}
\frac{d^3}{dx^3}g(1,1,\alpha)
=
-6g(4,1,\alpha)
-11g(4,2,\alpha)
-11\alpha g(4,2,1+\alpha)
-12g(4,3,\alpha)
\\
-18\alpha g(4,3,1+\alpha)
-6\alpha(1+\alpha) g(4,3,2+\alpha)
-6g(4,4,\alpha)
-11\alpha g(4,4,1+\alpha)
\nonumber
\\
-6\alpha(1+\alpha) g(4,4,2+\alpha)
-\alpha(2+3\alpha+\alpha^2) g(4,4,3+\alpha).
\nonumber
\end{gather}

Numerical examples are shown in Table \ref{tab.Eul1}
as a function of an upper limit $\hat s$ introduced to
the $s$-sum in (\ref{eq.eul}).
\begin{table}
\caption{Convergence of (\ref{eq.eul}). The rows $\hat s=5$ include
derivatives up to $(d^9/dx^9)g(1,1,\alpha)$.
}
\begin{tabular}{|l|l|r|r|r|}
\hline
$\alpha$ & $\hat s$ & $N=20$ & $N=40$ & $N=80$ \\
\hline
2 & 0 & 38.406819893505282& 38.406774836074573& 38.406769121772549\\ 
2 & 1 & 38.406768042600461& 38.406768091468035& 38.406768092776058\\ 
2 & 2 & 38.406768092940813& 38.406768092822471& 38.406768092821792\\ 
2 & 3 & 38.406768092821262& 38.406768092821786& 38.406768092821786\\ 
2 & 4 & 38.406768092821790& 38.406768092821786& 38.406768092821786\\ 
2 & 5 & 38.406768092821786& 38.406768092821786& 38.406768092821786\\ 

\hline3 & 0 & 372.804546001966145& 372.804497663931222& 372.804492644908823\\ 
3 & 1 & 372.804491815956332& 372.804491878037966& 372.804491879344629\\ 
3 & 2 & 372.804491879555600& 372.804491879383635& 372.804491879382884\\ 
3 & 3 & 372.804491879382026& 372.804491879382878& 372.804491879382879\\ 
3 & 4 & 372.804491879382886& 372.804491879382879& 372.804491879382879\\ 
3 & 5 & 372.804491879382879& 372.804491879382879& 372.804491879382879\\ 

\hline4 & 0 & 3898.687393982966873& 3898.687343358247550& 3898.687339020151279\\ 
4 & 1 & 3898.687338378537251& 3898.687338454043064& 3898.687338455312334\\ 
4 & 2 & 3898.687338455579702& 3898.687338455344560& 3898.687338455343761\\ 
4 & 3 & 3898.687338455342472& 3898.687338455343756& 3898.687338455343757\\ 
4 & 4 & 3898.687338455343768& 3898.687338455343757& 3898.687338455343757\\ 
4 & 5 & 3898.687338455343757& 3898.687338455343757& 3898.687338455343757\\ 

\hline5 & 0 & 41293.884453984789367& 41293.884401930483936& 41293.884398228979705\\ 
5 & 1 & 41293.884397725045837& 41293.884397813940382& 41293.884397815146458\\ 
5 & 2 & 41293.884397815480301& 41293.884397815172721& 41293.884397815171896\\ 
5 & 3 & 41293.884397815170059& 41293.884397815171891& 41293.884397815171892\\ 
5 & 4 & 41293.884397815171909& 41293.884397815171892& 41293.884397815171892\\ 
5 & 5 & 41293.884397815171892& 41293.884397815171892& 41293.884397815171892\\

\hline
\end{tabular}
\label{tab.Eul1}
\end{table}

\section{Summary}
The series limits of
(\ref{eq.Calphadef})
are $C^{(2)}\approx
38.40676809282179$ \cite[A118582]{sloane},
$C^{(3)}\approx
372.80449187938288$,
and
$C^{(4)}\approx
3898.68733845534376$.

\appendix
\section{The series $\sum_k 1/(k \log^\alpha k)$}
The simpler series
\begin{equation}
D^{(\alpha)}\equiv \sum_{k=2}^\infty \frac{1}{k(\log k)^\alpha}
\end{equation}
can be treated by the same approach. This replaces (\ref{eq.In}) by
\begin{equation}
J_N^{(\alpha)}=\int_{x=N+1/2}^\infty \frac{dx}{x(\log x)^\alpha}
=
\frac{1}{(\alpha-1)[\log (N+1/2)]^{\alpha-1}},
\quad \alpha=2,3,\ldots.
\end{equation}
and (\ref{eq.final}) by
\begin{gather}
D^{(\alpha)}
=
\sum_{k=2}^N
\frac{1}{k(\log k)^\alpha}
+J_N^{(\alpha)}
-
\sum_{s=1}^\infty
\frac{1}{4^s(2s+1)!}
\sum_{k=N+1}^\infty
\frac{d^{2s}}{dx^{2s}}\frac{1}{x(\log x)^\alpha}_{\mid k}
.
\end{gather}
The relevant new set of
derivatives of odd order
starts with
\begin{gather}
\frac{d}{dx}\frac{1}{x (\log x)^\alpha}
=
- \frac{\alpha+\log x}{x^2 (\log x)^{1+\alpha}};
\\
\frac{d^3}{dx^3}\frac{1}{x (\log x)^\alpha}
=
- \frac{\alpha(2+3\alpha+\alpha^2)+6\alpha\log x+(11\alpha+6\log x)(\log x)^2}{x^4 (\log x)^{3+\alpha}}.
\end{gather}
The results are (rounded)
$D^{(2)}\approx  2.10974280123689$ \cite[A115563]{sloane},
$D^{(3)}\approx  2.06588653888414$ \cite[A145419]{sloane},
$D^{(4)}\approx  2.55911974298673$ \cite[A145420]{sloane}.
The value of $D^{(2)}$ is known \cite{BoasAMM84,KreminskiCMJ28,BaxleyMM65,BradenAMM99}.

\section{The series $\sum_k \log k/(k^2-a^2)$}
The Hurwitz Zeta-function is $H(s,a)=\sum_{k\ge 1} 1/(k+a)^s$ \cite{CoffeyJCAM216,CoffeyJCAM21x}, and
its derivative is $H'(s,a)=-\sum_{k \ge 1} \log(k+a)/(k+a)^s$ \cite{ElizaldeJMP34,MillerJCAM100,JohannsonNumAlg69}.
We treat the variant
\begin{equation}
E^{(s)}(a)\equiv \sum_{k}^\infty \frac{\log k}{k^s -a^2}
\label{eq.Edef}
\end{equation}
for integer $s\ge 2$ with the Euler-Maclaurin-Summation technique of the previous chapters; 
the lower limit on $k$ is implied by $k^s \ge 1+a^2 \wedge k>1$.
In lieu of \eqref{eq.In} the integral 
\begin{eqnarray}
K_N^{(s)} & \equiv & \int_{x=N+1/2}^\infty \frac{\log x}{x^s -a^2} dx
\label{eq.Kdef}
\\
&=& 
\frac{2}{s} a^{2/s-2}\log a\int_{z=0}^u \frac{z^{s-2}}{1-z^s} dz 
-a^{2/s-2}\int_{z=0}^u \frac{z^{s-2}\log z}{1-z^s} dz 
\nonumber
\end{eqnarray}
is needed,
where $u\equiv a^{2/s}/(N+1/2)$.

For even $s$ the first integral in \eqref{eq.Kdef} is  \cite[2.146.3]{GR}
\begin{eqnarray*}
\int \frac{z^{s-2}}{1-z^s} dz
&=&
\frac{1}{s}\log \frac{1+z}{1-z}
-\frac{1}{s}\sum_{k=1}^{s/2-1}\cos\frac{2k(s-1)\pi}{s}\ln(1-2z\cos\frac{2k\pi}{s}+z^2)
\\
\nonumber
&&
+\frac{2}{s}\sum_{k=1}^{s/2-1}\sin\frac{2k(s-1)\pi}{s}\arctan \frac{x-\cos\frac{2k\pi}{s}}{\sin\frac{2k\pi}{s}}.
\end{eqnarray*}
For odd $s$ the first integral in \eqref{eq.Kdef} is \cite[2.146.4]{GR}
\begin{eqnarray*}
\int \frac{z^{s-2}}{1-z^s} dz
 &=&
-\frac{1}{s}\log (1-z)
 -\frac{1}{s}\sum_{k=1}^{\lfloor s/2\rfloor}\cos\frac{(2k-1)(s-1)\pi}{s}\ln(1+2z\cos\frac{(2k-1)\pi}{s}+z^2)
\\
\nonumber
&&
 -\frac{2}{s}\sum_{k=1}^{\lfloor s/2\rfloor}\sin\frac{(2k-1)(s-1)\pi}{s}\arctan \frac{z+\cos\frac{(2k-1)\pi}{s}}{\sin\frac{(2k-1)\pi}{s}}
.
\end{eqnarray*}

The second integral in \eqref{eq.Kdef} is a combination of Lerch's Phi functions \cite[\S 1.6]{MO}\cite{KanemitsuAM59}:
\begin{equation}
\int \frac{z^{s-2}}{1-z^s}\log z dz
=
\frac{1}{s} z^{s-1}\left[
\Phi(z^s,1,1-1/s)
\log z 
-
\frac{1}{s}\Phi(z^s,2,1-1/s)
\right].
\end{equation}

For $s=2$ this is rephrased in terms of the Dilogarithm \cite[A.3.2.(3)]{Lewin}\cite{AraiPAMJ4}:
\begin{equation}
K_N^{(2)}
= \frac{\log a}{2a} \log \frac{1+u}{1-u}
- \frac{1}{2a}\left[\Li_2(1-u)+\Li_2(-u)+\log u \log (1+u)-\frac{\pi^2}{6}\right]
.
\end{equation}
The relevant derivatives in \eqref{eq.eul} are superpositions of
\begin{equation}
\bar e(l,j)\equiv \frac{x^l}{(x^s-a^2)^j};
\quad
\hat e(l,j)\equiv \frac{x^l}{(x^s-a^2)^j}\log x,
\end{equation}
which obey the recurrences
\begin{eqnarray}
\frac{d}{dx} \hat e(l,j) &=& \bar e(l-1,j) +l \hat e(l-1,j) - js \hat e(l+s-1,j+1); \\
\frac{d}{dx} \bar e(l,j) &=& l \bar e(l-1,j) -js \bar e(l+s-1,j+1).
\end{eqnarray}
The derivatives of odd order start as
\begin{equation}
\frac{d}{dx} \frac{\log x}{x^s-a^2} =
\frac{d}{dx} \hat e(0,1)=
\bar e(-1,1)-s \hat e(s-1,2);
\end{equation}
\begin{gather}
\frac{d^3}{dx^3} \frac{\log x}{x^s-a^2} 
=
2 \bar e(-3,1)
+3s (2-s) \bar e(s-3,2)
+6s^2 \bar e(2s-3,3)
\\
\nonumber 
-6s^3 \hat e(3s-3,4)
+6s^2(s-1) \hat e(2s-3,3)
-s(s^2-3s+2) \hat e(s-3,2).
\end{gather}
The reference values are
$E^{(2)}(1) \approx 1.023138726427939295535088$ \cite[A340330]{sloane},
$E^{(2)}(3/2) \approx 1.2310517900928141110313101$,
$E^{(2)}(2) \approx 0.9204917248059521124027801$,
$E^{(2)}(i) \approx 0.879146908100343251177878322729$,
where $i$ is the imaginary unit.
\begin{rem}
Expansion of $1/(k^s-a^2)$ into a geometric series of $1/k^s$
and interchange of the order of the two sums
yields representations of $E^{(s)}(a)$ as series of derivatives
of the Riemann Zeta function \cite[A340440,A340484,A340485]{sloane}. Because swapping the orders
of summation is not valid in all cases, these representations are not necessarily
convergent for all $s$ and $a$. 
\end{rem}

\bibliographystyle{amsplain}
\bibliography{all}

\end{document}